\newtheorem{thm}{Theorem}[section] 
\newtheorem{conj}[thm]{Conjecture}
\newtheorem{cor}[thm]{Corollary}
\newtheorem{exmpl}[thm]{Example}
\newtheorem{lem}[thm]{Lemma}
\newtheorem{prop}[thm]{Proposition}
\theoremstyle{definition}
\newtheorem{rem}[thm]{Remark}
\newcommand\operA[2]{{\if!#2!\operatorname{#1}\else{\operatorname{#1}_{#2}^{\phantom{I}}}\fi}} 
\newcommand\Cref[1]{{Corollary~\ref{#1}}}%
\def\tr{{\operatorname{Tr}}}
\newcommand{\Trace}[1][]{\if!#1!\operatorname{Tr}\else{\operatorname{Tr}_{#1}^{\phantom{I}}}\fi} 
\long\def\forget#1\forgotten{{}} %
\def\({\left(}
\def\){\right)}
\def\X{\mathcal{X}}
\newif\iffurther
\newif\ifXY 
\journal{Communications in Algebra}
\begin{document}

\begin{frontmatter}

\title{Tensor Products of Cyclic Algebras of Degree 4 and their Kummer Subspaces}

\author{Adam Chapman}
\ead{adam1chapman@yahoo.com}
\author{Charlotte Ure}
\ead{charl.ure@gmail.com}

\address{Department of Mathematics, Michigan State University, East Lansing, MI 48824}

\begin{abstract}
We prove that the maximal dimension of a Kummer space in the generic tensor product of $n$ cyclic algebras of degree 4 is $4 n+1$.
\end{abstract}

\begin{keyword}
Division Algebras, Kummer Spaces, Generic Algebras, Graphs
\MSC[2010] primary 16K20; secondary 05C38,16W60
\end{keyword}

\end{frontmatter}

\section{Introduction}

Let $d$ be a positive integer and $F$ be an infinite field of characteristic 0 containing a primitive $d$th root of unity $\rho$. A cyclic or symbol algebra over $F$ of degree $d$ is an algebra that has a presentation $ F \left< x , y : x^d = \alpha, y^d = \beta, xy = \rho yx \right>$ for some $\alpha, \beta \in F^\times$. Denote this algebra by $\left( \alpha, \beta \right)_{d,F}$. \\

By the renowned Merkurjev-Suslin Theorem \cite{MS}, there is an isomorphism
\begin{align*}K_2(F) / d K_2(F)\ {\rightarrow}&\prescript{}{d}{Br}(F)\\
 \left\{ \alpha , \beta \right\} \mapsto& \left( \alpha, \beta \right)_{d,F}.\end{align*}
Consequently every central simple $F$-algebra of exponent $d$ is Brauer equivalent to a tensor product of cyclic algebras. A natural question is to determine the minimal number of cyclic algebras needed to express a given algebra $A$ of exponent $d$. This number is called the symbol length of $A$.\\

Consider a tensor product of cyclic algebras $A = \bigotimes_{k=1}^n \left( \alpha_k, \beta_k \right)_{d,F}$. An element $v \in A$ is called Kummer if $v^d \in F$. For example, the generators of the cyclic algebras in the product above are Kummer. An $F$-vector subspace of $A$ is called Kummer if it consists of Kummer elements. \\


Kummer spaces are connected to Clifford algebras (see for example \cite{ChapVish1}) and to the symbol length.
In \cite{Matzri} an upper bound for the symbol length of algebras of exponent $d$ over $C_r$-fields was provided based on the existence of certain Kummer spaces in tensor products of $n$ cyclic algebras of degree $d$.
The computation takes into account the maximal dimension of those Kummer spaces -- the larger the dimension, the sharper the bound.
For that purpose, Matzri considered the Kummer spaces described in Section \ref{Prem}, whose maximal dimension is $nd+1$.\\

Assume $A=\bigotimes_{k=1}^n \left( \alpha_k, \beta_k \right)_{d,F}$ is a division algebra. We conjecture the following:
\begin{conj}
The maximal dimension of a Kummer space in $A$ is $nd+1$.
\end{conj}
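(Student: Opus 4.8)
The statement splits into a lower and an upper bound, which I would treat separately. The lower bound $\geq nd+1$ is already in hand: the explicit family of Section~\ref{Prem} exhibits a Kummer space of exactly that dimension, so nothing new is required there and the entire content is the upper bound, namely that no Kummer space in $A$ can have dimension $nd+2$. Since the claim concerns the \emph{generic} tensor product, my first move is to work with the presentation over the rational function field in the parameters $\alpha_1,\beta_1,\dots,\alpha_n,\beta_n$, where $A$ is a division algebra with the monomial basis $\{x_1^{a_1}y_1^{b_1}\cdots x_n^{a_n}y_n^{b_n} : 0\le a_k,b_k\le d-1\}$. Genericity is exactly what lets me replace the condition ``$v$ is Kummer'' by the identical vanishing of the non-central coordinates of $v^d$ as polynomials in these parameters, and it is the reason one should expect a clean normal form for a Kummer subspace.

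Second, I would linearize. If $V$ is a Kummer space with basis $v_1,\dots,v_m$, then $(\sum_i \lambda_i v_i)^d\in F$ for all scalars $\lambda_i$; expanding and collecting the homogeneous pieces in the $\lambda$'s converts this single requirement into a hierarchy of multilinear identities, the first nontrivial one being a symmetric condition attached to each pair $(v_i,v_j)$. The algebraic engine throughout is the symbol identity $(\lambda u+\mu v)^d=\lambda^d u^d+\mu^d v^d$, valid whenever $uv=\rho vu$ and $u^d,v^d\in F$, which follows from the vanishing of the $\rho$-binomial coefficients for a primitive $d$th root. This identity both produces the explicit Kummer space of the lower bound and supplies the model for a \emph{compatibility relation} between Kummer elements; encoding the basis elements as vertices and joining compatible pairs, a Kummer space becomes a clique, and the target reduces to: every such clique has size at most $nd+1$.

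Third, and this is where the restriction to $d=4$ enters and where I expect the genuine obstacle to lie, I would bound the clique size by analysing how compatibility can be spread across the $n$ symbols. The delicate point is that an element of $V$ need not be a single monomial: it may mix monomials drawn from several symbols, so before any counting one must show, using the genericity from the first step, that the pairwise conditions force a block structure indexed by the symbols, i.e. that $V$ can be placed in a form adapted to the monomial grading. Granting such a normal form, each symbol contributes at most $d$ independent compatible directions and the one remaining degree of freedom is the shared central direction, yielding $nd+1$. For $d=4$ the fourth roots of unity make both the compatibility relation and this block analysis fully explicit, which is precisely why $4n+1$ is provable; for general $d$ the same outline applies, but the combinatorial classification of maximal compatible families is the step I cannot yet push through, and this is why the assertion must remain a conjecture beyond degree $4$.
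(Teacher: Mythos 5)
There is a genuine gap, and it sits exactly where you place your ``granting such a normal form'' clause. First, your reduction of the Kummer condition to a \emph{pairwise} compatibility relation is incorrect: spanning a Kummer space is equivalent to $v_1^{d_1}*\cdots*v_m^{d_m}\in F$ for \emph{all} compositions $d_1+\cdots+d_m=d$, and the constraints coming from triples and quadruples are not consequences of the pairwise ones. In the paper's argument for $d=4$ these higher constraints are the entire engine: Proposition~\ref{threeelements} (no directed three-cycles) uses $v^2*w*z$, $v*w^2*z$, $v*w*z^2$, and Proposition~\ref{fourelements} uses $v*w*z*t$. So the problem is not ``every clique has size at most $nd+1$'' --- with only binary edges that bound is simply not derivable, and your graph must carry the forbidden three- and four-vertex configurations as additional axioms before any counting can start. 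Second, the proposed ``block structure indexed by the symbols,'' with each symbol contributing at most $d$ directions plus one shared central direction, is not a property that monomial Kummer spaces have: even the standard example of dimension $dn+1$ in Section~\ref{Prem}, $V_k=F[x_k]y_k+V_{k-1}x_k$, interlocks the symbols, since elements of $V_{k-1}$ get multiplied through by $x_k$ at every stage. The paper's upper bound is obtained by an entirely different mechanism: assuming a monomial basis of size $4n+2$, one extracts a totally ordered chain of $2n+1$ vertices (Lemma~\ref{path}, resting on Proposition~\ref{noOtherCycle}), pairs each with its unique anti-commuting partner (Lemma~\ref{anti-commute}), classifies the resulting blocks into four types, and in every case produces via Lemma~\ref{quaternion} a tensor product of $n$ degree-$4$ cyclic algebras and a quaternion algebra inside $A$, contradicting the degree of $A$. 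Nothing in that argument counts directions per symbol.

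A further weak point is your ``first move'': genericity does not by itself let you replace Kummer elements by monomial ones through polynomial-identity considerations. The reduction from arbitrary to monomial Kummer spaces is a theorem (cited in the paper as \cite[Theorem 4.1]{KummerTensorChapman}, explained in Remark~\ref{mainremark}) whose proof passes through valuation theory: one extends to twisted iterated Laurent series, uses that the algebra is totally ramified under the Henselian valuation, and shows the span of the leading terms $\widetilde{v}$ is a monomial Kummer space of the same dimension. You gesture at this step but neither prove it nor cite it, and your sketch gives no substitute mechanism. Since you yourself concede that the combinatorial classification ``is the step I cannot yet push through,'' the proposal as it stands establishes only the lower bound, which was already known; the upper bound --- the entire content of the theorem even for $d=4$ --- is missing.
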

In the case $d=2$ the conjecture is known to be true as a result of the theory of Clifford algebras of quadratic forms. If $d=3$ and $n=1$, the conjecture holds true as well. This can be seen by considering the form $q(v)=\tr(v^2)$ over the space of trace zero elements. The form is nondegenerate and the space is 8-dimensional, so the Witt index is bounded by 4 from above. Since every Kummer space of dimension greater than 1 consists of trace zero elements and is totally isotropic with respect to $q(v)$, its dimension is bounded from above by the Witt index (see \cite{MV1} for more details).\\

The algebra $A$ is generic if $F=K(\alpha_k,\beta_k : 1 \leq k \leq n)$ is the function field in $2 n$ algebraically independent variables over some field $K$. In \cite{CGMRV} the conjecture was proved in the generic case for prime $d$ and $n=1$, and in \cite{KummerTensorChapman} it was proved in the generic case for $d=3$ and any $n$. \\

The goal of this paper is to prove the conjecture in the generic case for $d=4$ and any $n$. For each $k$ between $1$ and $n$, fix a pair of generators $x_k$ and $y_k$ for the algebra $(\alpha_k,\beta_k)_{d,F}$. A monomial in $A$ is an element of the form $\prod_{k=1}^n x_k^{a_k} y_k^{b_k}$ for some integers $a_k,b_k$ with $0 \leq a_k, b_k \leq d-1$. A Kummer subspace of $A$ is called monomial if it is spanned by monomials. In \cite[Theorem 4.1]{KummerTensorChapman} it was proved that the maximal dimension of a Kummer subspace of $A$ is equal to the maximal dimension of a monomial Kummer subspace of $A$. Hence we study the maximal dimension of monomial Kummer spaces.
 The algebraic question translates into a combinatorial question as can be seen in the following sections.

\section{Preliminaries}\label{Prem}

Let $d$ be an integer, $F$ be an infinite field of characteristic 0 and $A$ be a division tensor product of $n$ cyclic algebras of degree $d$ over $F$. Fix a presentation of $A$:
$$A = \bigotimes_{k=1}^n F\left< x_k, y_k : x_k^d = \alpha_k, y_k^d=\beta_k, x_ky_k = \rho y_k x_k \right>.$$

We use the following notation for the symmetric product, following \cite{Revoy}:
Given $v_1, \ldots, v_m \in A$, denote by $v_1^{d_1}* \cdots *v_m^{d_m}$ the sum of the products of $v_1, \ldots, v_m$, where each $v_k$ appears exactly $d_k$ times. We omit the superscript 1, e.g. $v_1*v_2^2 = v_1^1 * v_2^2= v_1 v_2^2 + v_1 v_2 v_1 + v_2^2 v_1$. 
A necessary and sufficient condition for $v_1,\dots,v_m$ to span a Kummer space is that $v_1^{d_1} * \dots * v_m^{d_m} \in F$ for any nonnegative integers $d_1,\dots,d_m$ with $d_1+\dots+d_m=d$.
If $v_1,\dots,v_m$ are monomials, then $v_1^{d_1} * \dots * v_m^{d_m}=c \cdot v_1^{d_1} \cdot \dots \cdot v_m^{d_m}$ where $c$ is the sum of $d$th roots of unity determined by the commutators of the $v_k$-s.
The condition $v_1^{d_1} * \dots * v_m^{d_m} \in F$ is then satisfied if and only if either $c=0$ or $v_1^{d_1} \cdot \dots \cdot v_m^{d_m} \in F$. The second option is satisfied if and only if the degrees of the different $x_k$-s and $y_k$-s add up to multiples of $d$. In this way the question becomes a combinatorial one.

\begin{exmpl}\label{mainexample}
Assume $v_1$ and $v_2$ span a Kummer space and $v_1 v_2=v_2 v_1$. Then $v_1^{d-1} * v_2=d v_1^{d-1} v_2$. Since the coefficient in this case is nonzero, $v_1^{d-1} v_2$ must be in $F$, which means that $F v_1=F v_2$, i.e. $v_1$ and $v_2$ are linearly dependent. 
\end{exmpl}

Monomial Kummer spaces of dimension $d n+1$ can be constructed in the following way: set $V_0=F$ and define $V_k$ recursively as $F[x_k] y_k + V_{k-1}x_k$ for any $k$ with $1 \leq k \leq n$. By induction one can show that each $V_k$ is Kummer: assume $V_{k-1}$ is Kummer. Every element in $V_K$ can be written as $f y_k+v x_k$ for some $f \in F[x_k]$ and $v \in V_{k-1}$. Since $(f y_k) (v x_k)=\rho (v x_k) (f y_k)$, we have $(f y_k+v x_k)^d=(f y_k)^d+(v x_k)^d$ by \cite[Remark 2.5]{ChapVish1}. Now $(f y_k)^d=N_{F[x_k]/F}(f) y_k^d \in F$ and $(v x_k)^d=v^d x_k^d \in F$. Note that the dimension of $V_n$ is $d n+1$.

\begin{rem}\label{mainremark}
\sloppy  By \cite[Theorem 4.1]{KummerTensorChapman}, every Kummer space in the generic tensor product of cyclic algebras gives rise to a monomial Kummer space of the same dimension.
This can be understood in terms of valuation theory (see \cite{TignolWadsworth:2015} for background):
Let $K$ be a field of characteristic 0 containing a primitive $d$th root of unity $\rho$.
Consider the field of iterated Laurent series $L=K((\alpha_1))((\beta_1))\dots((\alpha_n))((\beta_n))$ over $K$ and the algebra $$D=\bigotimes_{k=1}^n \prescript{}{L \enspace}{L} \langle x_k,y_k : x_k^d=\alpha_k, y_k^d=\beta_k, y_k x_k=\rho x_k y_k \rangle.$$ This algebra is the ring of twisted iterated Laurent series in variables $x_1,y_1,\dots,x_n,y_n$ subject to the relations $x_k x_\ell x_k^{-1} x_\ell^{-1}=y_k y_\ell y_k^{-1} y_\ell^{-1}=1$ and $y_k x_\ell y_k^{-1} x_\ell^{-1}=\rho^{\delta_{k,\ell}}$ for any $k,\ell \in \{1,\dots,n\}$ where $\delta_{k,\ell}$ is the Kronecker delta. In particular $D$ is a division algebra (see \cite[Section 1.1.3]{TignolWadsworth:2015}). The $(\alpha_1,\dots\beta_n)$-adic Henselian valuation on $L$ extends uniquely to $D$, and $D$ is totally ramified with respect to this valuation. Each $v \in D$ has a term of minimal value $\widetilde{v}$. Let $V$ be a Kummer space in $D$. Let $\widetilde{V}$ be the $L$-span of all $\widetilde{v}$ with $v \in V$. For any $v_1,\dots,v_m \in V$ and nonnegative integers $d_1,\dots,d_m$ with $d_1+\dots+d_m=d$, the element $\widetilde{v_1}^{d_1} * \dots * \widetilde{v_m}^{d_m}$ is either 0 or equal to $\widetilde{v_1^{d_1} * \dots * v_m^{d_m}}$. In both cases, $\widetilde{v_1}^{d_1} * \dots * \widetilde{v_m}^{d_m} \in L$, and so $\widetilde{V}$ is a Kummer space. Since each $\widetilde{v}$ is monomial, $\widetilde{V}$ is a monomial Kummer space. Since $D$ is totally ramified, by \cite[Proposition 3.14]{TignolWadsworth:2015}, for any $L$-subspace $W$ of $D$ we have $|\Gamma_W/\Gamma_L|=[W:L]$ where $\Gamma_W$ is the set of values of all $v \in W \setminus \{0\}$. Since $\Gamma_{V}=\Gamma_{\widetilde{V}}$ we have $[V:L]=[\widetilde{V}:L]$.
Now let $F=K(\alpha_1,\beta_1,\dots,\alpha_n,\beta_n)$ be the function field in $2n$ algebraically independent variables over $K$ and let $$A=\bigotimes_{k=1}^n \prescript{}{F\enspace}{F\langle x_k,y_k : x_k^d=\alpha_k, y_k^d=\beta_k, y_k x_k=\rho x_k y_k \rangle}.$$ For every Kummer space $V$ in $A$, $V \otimes_F L$ is a Kummer space in $D$ with $[V \otimes_F L:L]=[V:F]$. Therefore $\widetilde{V \otimes_F L}$ is a monomial Kummer space in $D$ with $[V \otimes_F L:L]=[\widetilde{V \otimes_F L}:L]$. Finally, $\widetilde{V \otimes_F L} \cap A$ is a monomial Kummer space in $A$ and $[\widetilde{V \otimes_F L}:L]=[\widetilde{V \otimes_F L} \cap A:F]$.
\end{rem}

\section{Graphs of Monomial Kummer Spaces}

Let $n$ be a positive integer, $F$ 
a field of characteristic 0 containing a primitive fourth root of unity $i$, and $A$  
 a division tensor product of $n$ cyclic algebras of degree $4$ over $F$. Fix a presentation of $A$ and let $\X$ be the set of monomials $\left\{\prod_{k=1}^n x_k^{a_k} y_k^{b_k} : 0 \leq a_k,b_k \leq 3 \right\}$ with respect to that presentation. \\

Following \cite{KummerTensorChapman}, we associate a directed labeled graph $(\X, E)$ with the Kummer elements of $A$, where the vertices are elements of $\X$ and the edges are of two types:
\begin{align*}
&\xymatrix{
v \ar@{->}[r] &w &\text{if } vw = i wv,}\text{and }\\
&\xymatrix{v \ar@{--}[r]& w &\text{if } vw = - wv.\\
}
\end{align*}
 
For any $B \subset \X$ denote by $\left(B, E_B\right)$ the subgraph obtained by taking all vertices in $B$ and all edges between them.

From now on suppose that $B \subset \X$ is the basis of a monomial Kummer subspace of $A$. Clearly for every two distinct monomials $v,w \in B$ we have $vw = i^k wv$ for some $k$ in $\left\{0,1,2,3\right\}$.
We want to understand the graph $\left(B, E_B\right)$, so that we can bound the number of vertices in this graph, and consequently the dimension of the monomial Kummer space.

\section{Basic Properties}

\begin{rem} For any two distinct $v,w \in B$, one of the following holds:
\begin{enumerate}
\item $\xymatrix{v \ar[r] & w}$, or
\item $ \xymatrix{v & w\ar[l]}$, or 
\item $\xymatrix{v \ar@{--}[r]&w}$ and $v^2 w^2 \in F$. 
\end{enumerate}\end{rem}

\begin{proof}
By Example \ref{mainexample}, $v$ and $w$ do not commute.
If $\xymatrix{v \ar@{--}[r]&w}$, we get $v^3*w=0, v^2*w^2 =2v^2w^2$ and $v*w^3=0$. Hence $v^2w^2 \in F$.
\end{proof}

\begin{lem}\label{anti-commute}
Any $z \in B$ can anti-commute with at most one other element in $B$.
\end{lem}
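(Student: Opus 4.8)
The plan is to argue by contradiction, after recasting commutation in linear‑algebraic terms. Encode each monomial $v=\prod_k x_k^{a_k}y_k^{b_k}$ by its exponent vector $\bar v=(a_k,b_k)_k\in(\mathbb Z/4)^{2n}$, and record the commutation factors through the alternating $\mathbb Z/4$‑bilinear (symplectic) form $\langle v,w\rangle=\sum_k(a_kd_k-b_kc_k)$ for $w=\prod_k x_k^{c_k}y_k^{d_k}$, so that $vw=i^{\langle v,w\rangle}wv$. In this language $v$ and $w$ commute exactly when $\langle v,w\rangle\equiv 0$ and anti‑commute exactly when $\langle v,w\rangle\equiv 2\pmod 4$. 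Suppose, for contradiction, that some $z\in B$ anti‑commutes with two distinct elements $w_1,w_2\in B$.

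First I would extract a parity constraint. By the preceding Remark, anti‑commutation yields $z^2w_1^2\in F$ and $z^2w_2^2\in F$. Each of these is a scalar times a monomial, so it lies in $F$ only if that monomial is trivial; reading off the $x_k$‑ and $y_k$‑exponents of $z^2w_j^2$, which are $2(p_k+a_k^{(j)})$ and $2(q_k+b_k^{(j)})$ modulo $4$ (writing $\bar z=(p_k,q_k)_k$ and $\bar w_j=(a_k^{(j)},b_k^{(j)})_k$), and using $-2\equiv 2\pmod4$, this forces $p_k\equiv a_k^{(j)}$ and $q_k\equiv b_k^{(j)}\pmod2$ for every $k$. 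Hence $\bar z\equiv\bar w_1\equiv\bar w_2\pmod2$, so $w_1$ and $w_2$ share $z$'s parity and I may write $\bar w_1=\bar z+2s$ and $\bar w_2=\bar z+2t$ for integer vectors $s,t$.

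The key step is then a short computation with the form. Since the form is alternating, $\langle z,w_1\rangle=\langle\bar z,\bar z+2s\rangle=2\langle\bar z,s\rangle\equiv 2\pmod4$, so $\langle\bar z,s\rangle$ is odd, and likewise $\langle\bar z,t\rangle$ is odd. Expanding bilinearly, $\langle w_1,w_2\rangle=\langle\bar z+2s,\bar z+2t\rangle\equiv 2\langle\bar z,t\rangle+2\langle s,\bar z\rangle=2\bigl(\langle\bar z,t\rangle-\langle\bar z,s\rangle\bigr)\pmod4$; as $\langle\bar z,t\rangle-\langle\bar z,s\rangle$ is a difference of two odd numbers it is even, whence $\langle w_1,w_2\rangle\equiv 0\pmod4$. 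Thus $w_1$ and $w_2$ commute. By Example~\ref{mainexample} two commuting elements of a Kummer space are linearly dependent, contradicting the fact that the distinct monomials $w_1,w_2$ are linearly independent in $A$. This contradiction proves the lemma.

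I expect the main obstacle to be the seemingly paradoxical middle step: that anti‑commuting with a \emph{common} element $z$ forces $w_1$ and $w_2$ to commute rather than to interact nontrivially. The engine behind it is that, once the Remark pins the parities of $w_1,w_2$ to that of $z$, anti‑commutation becomes a ``mod $2$'' condition on the symplectic form, and the two anti‑commutation relations with $z$ then cancel. The one point I would check with care is that $z^2w_j^2\in F$ genuinely pins down the parity coordinate by coordinate (this is where $-2\equiv2\pmod4$ is used), since it is exactly what legitimizes the substitutions $\bar w_1=\bar z+2s$ and $\bar w_2=\bar z+2t$ that drive the final computation.
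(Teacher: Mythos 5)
Your proof is correct, but it takes a genuinely different route from the paper's. The paper argues by cases on the relation between the two putative anti-commuting partners $v,w$ of $z$: if $vw=iwv$, it evaluates the symmetrized product $v*w*z^2$, whose coefficient $2-2i$ is nonzero, to get $vwz^2\in F$, and combines this with $z^2w^2\in F$ to force $w\in Fv$; if instead $v$ and $w$ anti-commute, it observes $Fv^2=Fw^2=Fz^2$, so that $v^{-1}w$ and $v^{-1}z$ are scalar multiples of monomials with even exponents, hence commute, which yields $wz=zw$, contradicting anti-commutation. Your symplectic-form computation subsumes both cases in one stroke: the parity constraint extracted from $z^2w_j^2\in F$ (which is essentially the same even-exponent observation the paper deploys in its second case) together with bilinearity and alternation of the form forces $\langle w_1,w_2\rangle\equiv 0\pmod 4$, so $w_1$ and $w_2$ commute no matter which relation they were assumed to have, and Example~\ref{mainexample} finishes the job. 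What the paper's route buys is that it stays entirely inside the symmetric-product calculus used throughout the rest of the arguments; what yours buys is uniformity --- no case split and no coefficient computation beyond the Remark --- and it makes explicit the exponent-vector dictionary $vw=i^{\langle v,w\rangle}wv$ that the paper only gestures at when it says the question ``becomes a combinatorial one.'' The two checks you flag are the right ones and both go through: since the monomials with exponents in $\{0,1,2,3\}$ form an $F$-basis of $A$, the condition $z^2w_j^2\in F$ does pin down the parities coordinatewise, and the sign convention in the form is immaterial because the argument only uses values modulo $4$ and their parities.
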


\begin{proof}
Assume the contrary, that it anti-commutes with two different elements $v,w \in B$.
There are two cases to check, $\xymatrix{v \ar@{--}[r]&w}$ and $\xymatrix{v \ar@{->}[r]&w}$.

If $\xymatrix{v \ar@{->}[r]&w}$ then from the relation $v * w *z^2 \in F$ we obtain $(2-2 i) v w z^2 \in F$ and so $v w z^2 \in F$. Since $z^2 w^2 \in F$, we obtain $v^{-1} w \in F$, which means $w \in F v$, which is a contradiction.

Hence we may assume that $\xymatrix{v \ar@{--}[r]&w}$, and so $F v^2=F w^2=F z^2$. Therefore $v^{-1} w$ and $v^{-1} z$ are scalar multiples of monomials with even powers of $x_k$ and $y_k$ for every $k$.
Therefore they commute.
From the equality $v^{-1} w v^{-1} z=v^{-1} z v^{-1} w$ we obtain $v^{-2} w z=v^{-2} z w$, which means $w z=z w$, contradiction.
\end{proof}

\begin{prop}\label{threeelements}
For every three distinct elements $v,w,z \in B$, the following case is impossible: $$\xymatrix{v \ar@{->}[r] &w\ar@{->}[ld] \\ z \ar@{->}[u] & }$$
\end{prop}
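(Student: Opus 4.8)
The plan is to rule out the directed triangle by imposing the Kummer condition on a degree-$4$ symmetric product in which one of the three vertices is repeated, and then to convert the resulting membership in $F$ into a forbidden coincidence of monomials. First I would translate the triangle into commutation relations: the three solid edges mean $vw=iwv$, $wz=izw$, and $zv=ivz$. Since $\deg A=4$ but we have only three monomials, any symmetric product of degree $4$ must repeat one vertex, so I would work with $v^2 * w * z$.

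The next step is to compute the scalar coefficient attached to $v^2 * w * z$. I would record that $v^2$ anticommutes with each of $w$ and $z$: from $vw=iwv$ one gets $v^2w=i^2wv^2=-wv^2$, and from $zv=ivz$ one gets $v^2z=-zv^2$, while $wz=izw$ as given. Taking $vvwz$ as a reference word, I would expand all twelve rearrangements of the multiset $\{v,v,w,z\}$, each of which equals $vvwz$ times a power of $i$ dictated by these relations, and sum the resulting roots of unity. The upshot is that $v^2 * w * z$ equals a \emph{nonzero} scalar multiple of $v^2wz$ (in fact $4-4i$). By the criterion recalled in Section~\ref{Prem}, the Kummer condition $v^2 * w * z\in F$ then forces $v^2wz\in F$; writing $\phi(m)\in(\Z/4)^{2n}$ for the vector of exponents of $x_1,y_1,\dots,x_n,y_n$ in a monomial $m$, reduced modulo $4$, this says $2\phi(v)+\phi(w)+\phi(z)\equiv 0$.

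Finally I would invoke the cyclic symmetry of the configuration. The relabeling $v\mapsto w\mapsto z\mapsto v$ sends each of the three relations $vw=iwv$, $wz=izw$, $zv=ivz$ to another one of them, so the identical computation applied to $w^2 * z * v$ yields $w^2zv\in F$, i.e.\ $\phi(v)+2\phi(w)+\phi(z)\equiv 0$. Subtracting the two congruences gives $\phi(v)\equiv\phi(w)$ in $(\Z/4)^{2n}$; but two elements of $\X$ with the same exponent vector modulo $4$ are the same monomial, contradicting the fact that $v$ and $w$ are distinct basis elements. I expect the only real bookkeeping to be the twelve-term phase computation of the coefficient of $v^2 * w * z$ — keeping the powers $i^{\pm1}$ straight is the delicate part — whereas once that coefficient is seen to be nonzero, the passage to exponent vectors and the subtraction modulo $4$ are immediate.
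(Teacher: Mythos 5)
Your proof is correct and takes essentially the same route as the paper: the paper likewise imposes the Kummer condition on the repeated-vertex symmetric products $v^2*w*z$ and $v*w^2*z$, finds the nonzero coefficients $4-4i$ and $-4-4i$, and from $v^2wz, vw^2z \in F$ concludes $v^{-1}w \in F$, which is exactly your exponent-vector subtraction in $(\Z/4)^{2n}$. Your use of the cyclic symmetry $v\mapsto w\mapsto z\mapsto v$ to obtain the second membership, instead of a second direct twelve-term computation, is only a cosmetic difference.
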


\begin{proof}
In this case, from the relations $v^2 * w * z,v * w^2 * z, v * w * z^2 \in F$ we get $(4-4i) v^2 w z,(-4-4i) v w^2 z,(4-4i) v w z^2 \in F$, and so $v^2 w z,v w^2 z,v w z^2 \in F$.
Therefore $(v^2 w z)^{-1} (v w^2 z) \in F$, which means that $v^{-1} w \in F$. Hence $w \in F v$, contradiction.
\end{proof}

\begin{cor}\label{dircy}
If $\left\{v_1,v_2,v_3,v_4\right\}$ is a directed simple cycle then $\xymatrix{v_1\ar@{--}[r] & v_3}$ and $\xymatrix{v_2\ar@{--}[r] & v_4}$.
\end{cor}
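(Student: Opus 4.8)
The plan is to reduce everything to the non-existence of directed triangles already established in \Pref{threeelements}. Write the directed simple cycle as $v_1 \to v_2 \to v_3 \to v_4 \to v_1$, where each arrow denotes a solid edge (so $v_k v_{k+1} = i\, v_{k+1} v_k$). The two pairs of ``diagonal'' vertices are $\{v_1,v_3\}$ and $\{v_2,v_4\}$, and by the trichotomy in the opening remark of this section each such pair is joined by either a forward solid edge, a backward solid edge, or a dashed edge. It therefore suffices to exclude both solid orientations on each diagonal, since the surviving possibility is precisely the dashed edge we want.

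First I would treat the diagonal $\{v_1,v_3\}$. If $v_3 \to v_1$, then the three vertices $v_1,v_2,v_3$ carry the edges $v_1 \to v_2 \to v_3 \to v_1$, i.e. a directed triangle, which \Pref{threeelements} forbids. If instead $v_1 \to v_3$, then combining this with the cycle edges $v_3 \to v_4 \to v_1$ yields the directed triangle $v_1 \to v_3 \to v_4 \to v_1$ on the vertices $v_1,v_3,v_4$, again impossible. Hence neither solid orientation survives, and the only remaining option is $v_1 \dashrightarrow v_3$, i.e. $v_1$ and $v_3$ anti-commute.

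The diagonal $\{v_2,v_4\}$ is handled identically by shifting the indices cyclically: $v_4 \to v_2$ would produce the directed triangle $v_2 \to v_3 \to v_4 \to v_2$ on $v_2,v_3,v_4$, while $v_2 \to v_4$ together with the cycle edges $v_4 \to v_1 \to v_2$ produces the directed triangle $v_2 \to v_4 \to v_1 \to v_2$ on $v_1,v_2,v_4$; both contradict \Pref{threeelements}. Thus $v_2 \dashrightarrow v_4$ as well, which completes the argument.

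I expect essentially no serious obstacle here: the substantive content is already packaged inside \Pref{threeelements}, and the only thing to get right is the bookkeeping of which triple of cycle vertices forms the forbidden triangle in each of the four sub-cases. The one point worth verifying carefully is that in every sub-case the three chosen vertices are genuinely distinct and that the hypothesized diagonal edge closes an actual $3$-cycle rather than a transitive triangle, since it is only the directed cycle that \Pref{threeelements} rules out.
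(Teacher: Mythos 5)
Your proof is correct and is exactly the argument the paper intends: the corollary is stated without proof precisely because, as you show, either solid orientation on a diagonal closes a directed $3$-cycle forbidden by Proposition~\ref{threeelements}, leaving the dashed (anti-commuting) edge as the only option under the trichotomy of the opening remark. All four sub-cases are bookkept correctly, and simplicity of the cycle guarantees the distinctness of vertices that Proposition~\ref{threeelements} requires.
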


\begin{prop}\label{fourelements}
For every four distinct elements $v,w,z,t \in B$, the following two cases are impossible:\\
\begin{tabu}{X[m]X[8,l,m]X[m]X[8,l,m]}
a)& $\xymatrix{v \ar@{->}[r]\ar@{<-}[d]\ar@{--}[rd] &w \ar@{->}[d]\ar@{--}[ld] \\ z\ar@{->}[r] &t }$ & b)&$\xymatrix{v \ar@{->}[r]\ar@{->}[d]\ar@{--}[rd] &w \ar@{->}[d]\ar@{->}[ld] \\ z \ar@{->}[r]&t }$\end{tabu}
\end{prop}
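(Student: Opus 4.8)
The plan is to rule out both configurations by the same device: each diagram fixes all six pairwise commutation relations among $v,w,z,t$, so the symmetric product $v*w*z*t$ equals $c\cdot vwzt$ for an explicit scalar $c$ which is a sum of fourth roots of unity. I will show $c\neq 0$, so the Kummer condition forces $vwzt\in F$, and then a one-line commutation computation contradicts the resulting centrality of $vwzt$.

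First I would record the relations read off from each diagram. In case a) these are $vw=iwv$, $zv=ivz$, $wt=itw$, $zt=itz$, together with $vt=-tv$ and $wz=-zw$; in case b) they are $vw=iwv$, $vz=izv$, $wt=itw$, $wz=izw$, $zt=itz$, together with $vt=-tv$. (Both are consistent with \Lref{anti-commute}, so neither diagram is excluded by the earlier results, which is why they must be treated directly.)

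Since $v,w,z,t$ are monomials, each of the $24$ orderings equals $vwzt$ times the power of $i$ accumulated when it is rearranged into $vwzt$ using the relations above; hence $v*w*z*t=c\cdot vwzt$, where $c$ is the sum of these $24$ powers of $i$. The heart of the proof is the evaluation of $c$, and the main obstacle is simply to verify that it does not vanish: a direct bookkeeping of the $24$ terms gives $c=-4i$ in case a) and $c=-4-4i$ in case b), both nonzero. I expect this to be the delicate step, because the companion degree-$4$ products supported on a single sub-triangle (such as $v^2*w*z$ or $v*w*t^2$) all vanish identically, so there is no shortcut through a three-element subconfiguration as in \Pref{threeelements}; the nonvanishing is a genuine feature of the full four-element product.

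With $c\neq 0$ in hand, the criterion of \Sref{Prem} gives $v*w*z*t\in F$, hence $vwzt\in F$, so $vwzt$ is central. To finish I would commute $w$ past this central element: conjugating $v$, $z$ and $t$ by $w$ and using the displayed relations multiplies $vwzt$ by the product of the corresponding commutation scalars, namely $(-i)(-1)(i)=-1$ in case a) and $(-i)(i)(i)=i$ in case b). In neither case is this scalar $1$, so $w(vwzt)w^{-1}\neq vwzt$, contradicting the centrality of $vwzt$. This rules out both a) and b).
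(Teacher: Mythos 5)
Your proof is correct, and its second half takes a genuinely different route from the paper's. The first half coincides: the paper likewise reduces each configuration to the single relation $v*w*z*t\in F$ and computes the same coefficients, $c=-4i$ in case a) and $c=-4-4i$ in case b) (both values check out), concluding $vwzt\in F$. Where you diverge is the endgame. The paper combines $vwzt\in F$ with an auxiliary relation coming from a sub-triangle --- $v^2wz\in F$ in case a) and $w^2vt\in F$ in case b) --- and divides to get $t\in Fv$, respectively $w\in Fz$, contradicting linear independence of $B$. You instead observe that $vwzt$ cannot lie in $F$ at all, since conjugation by $w$ multiplies it by $\chi(w,v)\chi(w,z)\chi(w,t)=(-i)(-1)(i)=-1$ in case a) and $(-i)(i)(i)=i$ in case b); both scalars are correct, and since $A$ is a division algebra this kills $vwzt\in F$ outright. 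Your finish is arguably cleaner: it needs only the one $24$-term coefficient computation plus a transparent commutation check, whereas the paper's route tacitly requires a second nonvanishing coefficient for the auxiliary sub-triangle product. That said, your parenthetical justification for avoiding a three-element shortcut is false: the degree-$4$ products supported on sub-triangles do \emph{not} all vanish. In case a) one has $v^2*w*z=-4i\,v^2wz\neq 0$ (though your other example $v*w*t^2$ does vanish), and in case b) $w^2*v*t=4i\,w^2vt\neq 0$; the paper exploits exactly these nonvanishing triangle products. This misstatement is harmless to your argument --- you never rely on it --- but the claim that the nonvanishing of the full four-element product is "a genuine feature" unavailable at the three-element level misreads the combinatorics: the obstruction to imitating the directed-three-cycle argument is only that these triangles are not directed cycles, not that their symmetric products vanish.
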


\begin{proof}
\begin{enumerate}
\item[$a)$] From the relation $v * w *z * t \in F$ we obtain $-4 i v w z t \in F$, which means that $vwzt \in F$. Since also $v^2 w z \in F$, we have $t v^{-1} \in F$, and so $t \in Fv$, contradiction. 
\item[$b)$] From the relation $v*w*z*t \in F$ we obtain $(-4-4i) v w z t \in F$, which means $vwzt \in F$. Since also $w^2 v t \in F$, we have $w z^{-1} \in F$, and so $w \in F z$, contradiction.
\end{enumerate}
\end{proof}

\section{Directed Cycles}

\begin{lem}\label{CycleAllArrows} Suppose for some $v_1, v_2, v_3, v_4 \in B$ we have
$$\xymatrix{v_1 \ar@{->}[r]\ar@{<-}[d]\ar@{--}[rd] &v_2 \ar@{->}[d]\ar@{--}[ld] \\v_3 \ar@{<-}[r]&v_4.}$$
Then for every other $w \in B$ either 
\begin{enumerate}
\item $\xymatrix{w \ar[r]& v_k}$ for $k=1,2,3,4$ or
\item $\xymatrix{w & v_k\ar[l]}$ for $k=1,2,3,4$.
\end{enumerate}\end{lem}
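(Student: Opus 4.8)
The plan is to first determine the edge type joining $w$ to each cycle vertex, and then propagate a consistent orientation around the directed $4$-cycle $v_1 \to v_2 \to v_4 \to v_3 \to v_1$ whose diagonals $v_1, v_4$ and $v_2, v_3$ anti-commute.

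First I would show that $w$ must be joined to each $v_k$ by a directed (solid) edge. By Example~\ref{mainexample}, $w$ cannot commute with any $v_k$, as they are distinct basis monomials spanning a Kummer space. Moreover, the hypothesis furnishes the dashed diagonals, so $v_1$ already anti-commutes with $v_4$ and $v_2$ already anti-commutes with $v_3$; hence each $v_k$ has exhausted its single permitted anti-commutation, and by Lemma~\ref{anti-commute} it can anti-commute with no further element of $B$, in particular not with $w$. By the trichotomy in the opening remark of Section~4, the edge between $w$ and each $v_k$ is therefore directed: for every $k$ we have either $w \to v_k$ or $v_k \to w$.

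Next I would exploit Proposition~\ref{threeelements} along the four edges of the cycle. Fix a directed cycle edge $v_a \to v_b$ (one of $v_1 \to v_2$, $v_2 \to v_4$, $v_4 \to v_3$, $v_3 \to v_1$) and consider it together with $w$. Among the three solid edges on $\{w, v_a, v_b\}$, a directed triangle arises precisely when $w \to v_a$ and $v_b \to w$, producing the cycle $w \to v_a \to v_b \to w$; every other orientation makes $w$ a source, a sink, or the middle of a transitive triple, none of which is a $3$-cycle. By Proposition~\ref{threeelements} the directed triangle is impossible, so along each cycle edge $v_a \to v_b$ one cannot have both $w \to v_a$ and $v_b \to w$.

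Finally I would close with a cyclic parity argument. Let $S = \{k : w \to v_k\}$ and $T = \{k : v_k \to w\}$, so that $\{v_1,v_2,v_3,v_4\} = S \sqcup T$ by the first step. If both $S$ and $T$ were nonempty, then traversing the cyclic order $v_1, v_2, v_4, v_3$ forces a cycle edge $v_a \to v_b$ with $v_a \in S$ and $v_b \in T$, which is exactly the pattern excluded in the previous step. Hence one of $S, T$ is empty, giving either $w \to v_k$ for all $k$ (case 1) or $v_k \to w$ for all $k$ (case 2). I expect the main obstacle to be bookkeeping rather than substance: one must carefully match the orientation convention of Proposition~\ref{threeelements} to the cyclic order $v_1, v_2, v_4, v_3$ (which differs from the index order), verify that the \emph{only} triangle ruled out on each edge is the mixed one, and confirm that a cyclic sequence of orientations admitting no $S \to T$ switch must be constant.
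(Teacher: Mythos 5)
Your proposal is correct and takes essentially the same approach as the paper: anti-commutation between $w$ and any $v_k$ is ruled out by Lemma~\ref{anti-commute} (each $v_k$ already anti-commutes with its diagonal partner), and a mixed orientation along a cycle edge would create a directed triangle forbidden by Proposition~\ref{threeelements}. The paper treats just one representative adjacent pair ($w \to v_1$ together with $v_2 \to w$) and leaves the propagation around the cycle implicit, which your $S$/$T$ partition argument merely makes explicit.
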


\begin{proof}
By Lemma \ref{anti-commute}, $w v_k = -v_k w$  is not possible for any $k$. 
Suppose $\xymatrix{w \ar[r]& v_1}$ and $\xymatrix{v_2 \ar[r]& w}$. Then we have $\xymatrix{w \ar[d] & v_2\ar[l]\\ v_1 \ar[ru]}$ contradictory to Proposition \ref{threeelements}.
\end{proof}

\begin{prop}\label{noOtherCycle}
There are no directed cycles of length other than four in $\left(\X, E_B\right)$. \end{prop}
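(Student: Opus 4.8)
The plan is to rule out directed cycles of every length $\ell \neq 4$ by a minimal-counterexample argument that leans on the local obstructions already established. First I would dispose of the short lengths. A directed $2$-cycle $v \to w \to v$ would force $vw = iwv$ and $wv = ivw$, hence $vw = -vw$, which is impossible in a division algebra; directed $3$-cycles are excluded by Proposition~\ref{threeelements}; and there are no loops, since distinct elements of $B$ never commute by Example~\ref{mainexample}. Thus any counterexample has length $\ell \geq 5$, and I would fix one of minimal length, writing it as $v_1 \to v_2 \to \dots \to v_\ell \to v_1$ with indices read modulo $\ell$.

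The heart of the argument is to analyze, for each consecutive triple $v_{j-1} \to v_j \to v_{j+1}$, the ``skip'' chord joining $v_{j-1}$ and $v_{j+1}$. By the trichotomy governing any pair of distinct vertices (they cannot commute, so exactly one of $v_{j-1} \to v_{j+1}$, $v_{j+1} \to v_{j-1}$, or a dashed edge occurs), this chord is one of three types. The backward arrow $v_{j+1} \to v_{j-1}$ is impossible, since $v_{j-1} \to v_j \to v_{j+1} \to v_{j-1}$ would be a directed $3$-cycle, contradicting Proposition~\ref{threeelements}. A forward arrow $v_{j-1} \to v_{j+1}$ produces the shortcut cycle $v_{j-1} \to v_{j+1} \to v_{j+2} \to \dots \to v_{j-1}$ of length $\ell - 1$; if $\ell \geq 6$ this is a directed cycle of length $\ell - 1 \geq 5$, contradicting minimality.

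The delicate case, which I expect to be the main obstacle, is $\ell = 5$: there the shortcut cycle has length $4$ and is therefore permitted, so minimality alone yields nothing. Here I would instead invoke Lemma~\ref{CycleAllArrows}. The shortcut $v_{j-1} \to v_{j+1} \to v_{j+2} \to v_{j+3} \to v_{j-1}$ is a directed $4$-cycle whose diagonals are dashed by Corollary~\ref{dircy}, so it matches the hypothesis of Lemma~\ref{CycleAllArrows}; the single leftover vertex $v_j$ must then either dominate all four cycle vertices or be dominated by all four. But the original cycle already contains $v_{j-1} \to v_j$ and $v_j \to v_{j+1}$, so $v_j$ has both an incoming and an outgoing edge within the $4$-cycle, a contradiction. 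Hence in every case the skip chord $v_{j-1},v_{j+1}$ must be a dashed edge.

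Finally, once every skip chord is dashed, each vertex $v_j$ anti-commutes with both $v_{j-2}$ and $v_{j+2}$; since $\ell \geq 5$ these are distinct vertices, contradicting Lemma~\ref{anti-commute}, which permits each element to anti-commute with at most one other. This contradiction would complete the proof that no directed cycle of length $\ell \neq 4$ exists. The only subtlety to watch is that the reduction by shortcuts genuinely lands on the two base obstructions (minimality for $\ell \geq 6$, and Lemma~\ref{CycleAllArrows} for $\ell = 5$), and that the final anti-commuting pair $v_{j-2}, v_{j+2}$ is verified to be distinct whenever $\ell \geq 5$.
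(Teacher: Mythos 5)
Your proof is correct, but it follows a genuinely different route from the paper's. The paper fixes an arbitrary directed cycle of length $r \geq 5$ and argues via extremal indices: it chooses $k$ maximal with $v_k \to v_1$ and $j$ minimal with $v_1 \to v_j$, uses Proposition~\ref{threeelements} to force the anti-commutations that turn each choice into a directed four-cycle through $v_1$, and then plays the two four-cycles against each other --- $k > j-2$ produces two distinct anti-commuting partners $v_{k+1}$ and $v_{j-1}$ of $v_1$, contradicting Lemma~\ref{anti-commute}, while $k = j-2$ (and the boundary case $k=2$) contradicts Lemma~\ref{CycleAllArrows}. You instead run a minimal-counterexample argument on the cycle length and classify every skip chord $(v_{j-1}, v_{j+1})$: a backward arrow gives a directed three-cycle (Proposition~\ref{threeelements}), a forward arrow gives a shortcut cycle of length $\ell - 1$, which violates minimality when $\ell \geq 6$ and, in the delicate case $\ell = 5$, is a directed four-cycle whose diagonals are dashed by Corollary~\ref{dircy}, so that Lemma~\ref{CycleAllArrows} forbids the leftover vertex $v_j$ from having both an incoming edge from $v_{j-1}$ and an outgoing edge to $v_{j+1}$. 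Once all chords are dashed, each $v_j$ anti-commutes with the two distinct vertices $v_{j-2}$ and $v_{j+2}$ (distinct exactly because $4 \not\equiv 0 \bmod \ell$, which you correctly checked), contradicting Lemma~\ref{anti-commute}. Both arguments draw on the same toolbox, but yours is more symmetric and mechanical --- a uniform trichotomy per chord, with minimality absorbing all lengths $\geq 6$ and only $\ell = 5$ requiring the structural Lemma~\ref{CycleAllArrows} --- whereas the paper's extremal-index argument avoids induction entirely and works inside a single fixed cycle, at the price of a more intricate case analysis comparing $k$ and $j$. Your final step is also cleaner in that the contradiction is global (every vertex gets two anti-commuting partners) rather than localized at $v_1$.
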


\begin{proof} 
We already know that directed cycles of length less than four do not exist.
Suppose there is
$$\xymatrix{v_1 \ar@{<-}[r] &v_2 \ar@{<-}[r] &v_3 \ar@{<-}[r] & \cdots \ar@{<-}[r] & v_{r-1} \ar@{<-}[llld]\\ & v_r \ar@{<-}[lu]}$$
for $r \geq 5$. Let $k$ be the maximal integer with $2 \leq k \leq r-1$ such that $\xymatrix{v_k \ar[r]& v_1}$. Now if $\xymatrix{v_1 \ar[r]& v_{k+1}}$ we get a three-cycle
$$\xymatrix{v_1 \ar@{<-}[r]& v_k \ar@{<-}[ld]\\ v_{k+1} \ar@{<-}[u] }$$ which is impossible. Hence $v_{k+1}v_1 = -v_1 v_{k+1}$ and in particular $k \leq r-2$. By the maximality of $k$, $\xymatrix{v_{k+2} \ar[r]& v_1}$ is impossible. Since $v_1 v_{k+1} = - v_{k+1}v_1$, we have $v_{k+2} v_1 \neq -v_1 v_{k+2}$ and we get the cycle
$$\xymatrix{v_1 \ar@{<-}[r] \ar[d] \ar@{--}[rd] & v_k \ar@{<-}[d] \ar@{--}[ld]\\v_{k+2} \ar[r] &v_{k+1}\vspace{1em} .}$$
If $k=2$ then $\xymatrix{v_5 \ar[r]& v_4}$ and so also by Lemma \ref{CycleAllArrows} $\xymatrix{v_5 \ar[r]& v_1}$, contradictory to the maximality of $k$. Hence $k \geq 3$.

Let $j$ be the minimal integer with $3 \leq j \leq r$ such that $\xymatrix{v_1 \ar[r]& v_{j}}$. As before we get the four-cycle
$$\xymatrix{v_1 \ar@{->}[r] \ar@{<-}[d] \ar@{--}[rd] & v_j \ar@{->}[d] \ar@{--}[ld]\\ v_{j-2} \ar@{<-}[r] &v_{j-1} .}$$
By maximality of $k$ we get $k\geq j-2$. If $k > j-2$ then $v_{k+1}$ and $v_{j-1}$ both anti-commute with $x_1$, contradictory to Lemma \ref{anti-commute}.

Assume $k=j-2$. Then $\xymatrix{v_{k-1} \ar[r]& v_1}$. However $\xymatrix{v_{k} \ar[r]& v_{k-1}}$ contradictory to Lemma \ref{CycleAllArrows}.
\end{proof}

\begin{lem}\label{blockII}
 Suppose for some $v_1, v_2, v_3, v_4 \in B$ we have
$$\xymatrix{v_1 \ar[r]\ar@{--}[rd]\ar@{->}[d]& v_2 \ar[d] \ar@{--}[ld] \\ v_3 \ar[r]& v_4 .}$$
Then for every other $w \in B$ either 
\begin{enumerate}
\item $\xymatrix{w \ar[r]& v_k}$ for $k=1,2,3,4$ or
\item $\xymatrix{w & v_k\ar[l]}$ for $k=1,2,3,4$.
\end{enumerate}
\end{lem}

\begin{proof}
By Lemma \ref{anti-commute} $wv_k = -v_kw$  is not possible for any $k$. \\
If $\xymatrix{w \ar[r]& v_1}$ then by Proposition \ref{threeelements} and Lemma \ref{anti-commute} $\xymatrix{w \ar[r]& v_2},$ $\xymatrix{w \ar[r]& v_3}$ and $\xymatrix{w \ar[r]& v_4}$.\\
From now on suppose $\xymatrix{v_1 \ar[r]& w}$. If $\xymatrix{w \ar[r]& v_2}$, then $\xymatrix{w \ar[r]& v_3}$ as well. But then we get
$$\xymatrix{ v_1 \ar[r] \ar@{--}[rd]\ar[d] & w \ar[ld]\ar[d] \\ v_2 \ar[r] & v_3,}$$ contradictory to Proposition \ref{fourelements}. Hence $\xymatrix{v_2 \ar[r]& w}$. Similarly, if $\xymatrix{w \ar[r]& v_3}$ we get $$\xymatrix{ v_1 \ar[r] \ar@{--}[rd]\ar[d] & v_2 \ar[ld]\ar[d] \\ w \ar[r] & v_3,}$$ contradiction. Thus $\xymatrix{v_3 \ar[r]& w}$. By Proposition \ref{threeelements} $\xymatrix{v_4 \ar[r]& w}$.
\end{proof}

\section{Maximal dimension of a Monomial Kummer space}

The goal of this section is to bound the dimension of a monomial Kummer space.

\begin{lem}\label{path}Let $B' \subset B$ be a subset of cardinality $r$ satisfying the following condition: For all $v, w \in B$ either $\xymatrix{v \ar[r]& w}$ or $\xymatrix{w \ar[r]& v}$. Then the elements of $B'$ can be ordered $\left\{v_1,\dots,v_r\right\}$ such that $\xymatrix{v_k \ar[r]& v_j}$ for $k < j$.
\end{lem}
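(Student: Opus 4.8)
The plan is to recognize the subgraph induced on $B'$ as a \emph{tournament} --- a directed graph in which every pair of distinct vertices is joined by exactly one arrow --- and then to show that this tournament is acyclic, whence it is transitive and induces a linear order. First I would verify the tournament structure: by the hypothesis on $B'$, any two distinct elements are joined by a directed edge $v\to w$ or $w\to v$, and no two of them anti-commute (there are no dashed edges among the vertices of $B'$). Thus all edges internal to $B'$ are arrows, and none are dashed.

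The heart of the argument is to prove that the subgraph on $B'$ contains no directed cycle. Any directed cycle inside $B'$ is in particular a directed cycle in $\left(\X, E_B\right)$, so by \Pref{noOtherCycle} its length must be exactly four. But by \Cref{dircy} the two diagonals of any directed four-cycle must anti-commute, that is, they are dashed edges. Since every edge between elements of $B'$ is an arrow and never dashed, no four-cycle can lie entirely within $B'$; and cycles of length three are excluded as well (directly by \Pref{threeelements}, or again as a special case of \Pref{noOtherCycle}). Hence the tournament on $B'$ is acyclic.

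From acyclicity the ordering follows by a standard descent, which I would phrase as an induction on $r$. A finite acyclic tournament possesses a \emph{source}: starting from any vertex and repeatedly following an incoming arrow backward, the walk cannot continue indefinitely without revisiting a vertex, which would produce a directed cycle; so some vertex has no incoming arrow. In a tournament a vertex with no incoming arrow has an arrow to every other vertex, so this source $v_1$ satisfies $v_1\to v_j$ for all remaining elements. Deleting $v_1$ leaves a subset of $B$ still satisfying the hypothesis of the lemma, so by the inductive hypothesis its elements can be ordered $v_2,\dots,v_r$ with $v_k\to v_j$ for $2\le k<j$; prepending $v_1$ yields the desired ordering of all of $B'$.

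I expect the only real content to be the acyclicity step, and its entire force comes from correctly applying \Pref{noOtherCycle} and \Cref{dircy} to the arrow-only subgraph $B'$ --- the point being that the one surviving cycle length, four, is itself ruled out inside $B'$ because it would demand anti-commuting diagonals that $B'$ does not contain. Once acyclicity is in hand, the passage to a transitive tournament, and hence to a total order, is routine.
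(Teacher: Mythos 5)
Your proposal is correct and takes essentially the same approach as the paper: the paper's one-line proof likewise assumes the ordering fails, deduces that $B'$ contains a directed cycle, invokes Proposition~\ref{noOtherCycle} to force the length to be four, and then uses Corollary~\ref{dircy} to obtain anti-commuting pairs inside $B'$, a contradiction since all edges within $B'$ are arrows. The only difference is that you spell out the standard combinatorial step (a finite acyclic tournament has a source and is therefore transitively orderable), which the paper leaves implicit.
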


\begin{proof} Otherwise $B'$ contains a directed cycle, which is of length $4$ by \ref{noOtherCycle} and so by Corollary \ref{dircy} $B'$ contains pairs of anti-commuting elements, contradiction.
\end{proof}

\begin{lem}\label{quaternion}
Assume $B$ contains $\left\{v_1,v_2,\dots,v_{2n+1}\right\}$ such that $\xymatrix{ v_k \ar[r]& v_j}$ for $k<j$.  Suppose there exists another element $w \in B$ and an odd integer $r$ between $1$ and $2 n+1$ such that
\begin{enumerate}
\item  $\xymatrix{ v_k \ar[r]& w}$ for $k <r$, and
\item $\xymatrix{v_r \ar@{--}[r]& w,}$ and 
\item $\xymatrix{w \ar[r]& v_k}$ for $k >r$.
\end{enumerate}
Then the set $F\langle B\rangle$ contains a tensor product of $n$ cyclic algebras of degree 4 and one quaternion algebra over $F$.
\end{lem}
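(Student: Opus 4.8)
The plan is to pass to the ``commutation picture'' and build the $n+1$ tensor factors a pair of generators at a time. Since every element of $\X$ is a monomial $\prod_k x_k^{a_k}y_k^{b_k}$, I identify $v\in\X$ with its exponent vector $\bar v\in(\Z/4)^{2n}$ and record commutation through the alternating, bi-additive pairing $\langle\cdot,\cdot\rangle\colon(\Z/4)^{2n}\times(\Z/4)^{2n}\to\Z/4$ determined by $vw=i^{\langle\bar v,\bar w\rangle}wv$. In this language the hypotheses read $\langle\bar v_k,\bar v_j\rangle=1$ for $k<j$, while $\langle\bar v_k,\bar w\rangle$ equals $1$ for $k<r$, equals $2$ for $k=r$, and equals $-1$ for $k>r$. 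Two facts are used throughout: every monomial $v$ satisfies $v^4\in F$, and a monomial has its square in $F$ exactly when its exponent vector is divisible by $2$.

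First I would isolate the generator of the quaternion factor. Consider the alternating product $z=v_1v_2^{-1}v_3v_4^{-1}\cdots v_{2n+1}$, whose class is $\bar z=\sum_{j=1}^{2n+1}(-1)^{j-1}\bar v_j$. A direct telescoping computation shows $\langle\bar z,\bar v_j\rangle=0$ for every $j$, so $z$ commutes with all of $v_1,\dots,v_{2n+1}$; and since $r$ is odd, the same bookkeeping gives $\langle\bar z,\bar w\rangle=2$, that is, $zw=-wz$. In particular $\bar z\neq0$, so $z\notin F$.

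Next I would extract the $n$ degree-$4$ cyclic factors by symplectic Gram--Schmidt on $v_1,\dots,v_{2n+1}$. Because every pairing $\langle\bar v_k,\bar v_j\rangle$ is a unit in $\Z/4$, I can repeatedly select a hyperbolic pair, say $X_1=v_1$ and $Y_1=v_2$ with $X_1Y_1=iY_1X_1$, and replace each remaining generator $u$ by the monomial $u'=u\,X_1^{-\langle\bar u,\bar Y_1\rangle}Y_1^{\langle\bar u,\bar X_1\rangle}$, whose class is orthogonal to both $\bar X_1$ and $\bar Y_1$. Iterating yields monomials $X_1,Y_1,\dots,X_n,Y_n$ with $X_kY_k=iY_kX_k$, all cross pairs commuting, and each fourth power in $F^\times$, so that $X_1,Y_1,\dots,X_n,Y_n$ generate a copy of $C:=\bigotimes_{k=1}^n(X_k^4,Y_k^4)_{4,F}$. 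Since $\bar z$ is orthogonal to every $\bar v_j$ it is orthogonal to their whole span, so $z$ centralizes $C$; and applying the same orthogonalization to $w$ only multiplies $w$ on the right by an element $c\in C$, and as $z$ centralizes $C$ the relation is preserved, $z(wc)=(zw)c=-w(zc)=-(wc)z$, so after this step $w$ centralizes $C$ as well.

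Finally I would produce the quaternion factor $H$ inside the centralizer of $C$ from the anti-commuting pair $z,w$, which satisfies $z^4,w^4\in F$ and $zw=-wz$, and then conclude: as a tensor product of central simple $F$-algebras $C\otimes_F H$ is simple, so the $F$-algebra map $C\otimes_F H\to A$ determined by the chosen generators is injective, and $F\langle B\rangle$ contains a copy of $C\otimes_F H$. I expect this last step to be the main obstacle, for a structural reason: no two anti-commuting monomials can both have square in $F$, since their exponent vectors would then lie in $2(\Z/4)^{2n}$, on which the pairing vanishes identically. Hence the two generators of $H$ cannot be taken monomial and must instead be realized as genuinely non-monomial elements of $F\langle z,w\rangle$ whose squares lie in $F$; producing these, and verifying that the resulting $H$ meets $C$ only in $F$ so that the two factors are independent, is where the real content of the argument lies.
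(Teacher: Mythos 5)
Your construction is, up to cosmetics, the paper's own: your element $z=v_1v_2^{-1}v_3v_4^{-1}\cdots v_{2n+1}$ has exactly the exponent vector of the paper's last generator $\mu_{n+1}$, your orthogonalized $w'=wc$ plays the role of $\eta_{n+1}$, and your symplectic Gram--Schmidt reproduces in iterative form the telescoping generators $\mu_j,\eta_j$ that the paper writes down in closed form. One point you assert but do not check is that the Gram--Schmidt loop can be iterated: after replacing $u$ by $u'=uX_1^{-\langle\bar u,\bar Y_1\rangle}Y_1^{\langle\bar u,\bar X_1\rangle}$, the new pairings are $\langle\bar u',\bar s'\rangle=\langle\bar u,\bar s\rangle+\langle\bar u,\bar Y_1\rangle\langle\bar s,\bar X_1\rangle-\langle\bar u,\bar X_1\rangle\langle\bar s,\bar Y_1\rangle$, and for the chain (all four correction pairings equal $-1$) this correction vanishes, so the pattern $\langle\bar v_j',\bar v_k'\rangle=1$ for $j<k$ persists; this is easy but needed, since otherwise ``every pairing is a unit'' could fail after the first step.

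The genuine gap is that you never reach the stated conclusion: you stop at the anti-commuting pair $z,w'$ centralizing $C$ and declare the quaternion factor over $F$ to be open. Your parity obstruction is correct, and in fact it bites the paper's own proof: since $v_r$ and $w$ anti-commute in $B$, the Kummer relation $v_r^2*w^2=2v_r^2w^2\in F$ gives $z^2w'^2\in F$ up to a scalar, so if either square lay in $F$ both would, forcing both exponent vectors into $2(\Z/4)^{2n}$, where the pairing vanishes --- contradicting $zw'=-w'z$. Hence neither $z^2$ nor $w'^2$ is in $F$, and $F\langle z,w'\rangle$ (equally, the paper's $F\langle\mu_{n+1},\eta_{n+1}\rangle$) is an $8$-dimensional algebra whose center is the quadratic extension $F[z^2]$: the paper's closing assertion ``the last algebra is a quaternion algebra over $F$'' is unjustified as written. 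But the step you left open closes in one line from pieces you already have: your $C$ is a tensor product of $n$ degree-$4$ symbol algebras, so $\dim_F C=16^n=\dim_F A$, whence $C=A$; then $z$ centralizes $A$, so $z\in \operatorname{Z}(A)=F$, contradicting $zw'=-w'z$. So the hypotheses of the lemma are themselves contradictory and the conclusion holds vacuously --- consistent with the lemma's only role in Theorem~\ref{main}, which is to produce contradictions; equivalently, one can restate the conclusion as ``$F\langle B\rangle$ contains an $F$-subalgebra of dimension greater than $16^n$,'' which both your construction and the paper's deliver honestly. As submitted, however, your proposal does not prove the statement: the final factor is missing, and the worry you raise is resolved not by building non-monomial quaternion generators but by the dimension count above.
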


\begin{proof}
Let $\ell=\frac{r-1}{2}$. 
Now write 
$$\mu_j=\begin{cases} \left( \prod_{k=1}^{j-1} v_{2k-1} v_{2k}^{-1} \right) v_{2j-1} & j \leq \ell\\
\left( \prod_{k=1}^{\ell} v_{2k-1} v_{2k}^{-1} \right) \left( \prod_{k=\ell+1}^{j-1} v_{2k} v_{2k+1}^{-1} \right) v_{2 j} & \ell+1 \leq j \leq n \\
\left( \prod_{k=1}^{\ell} v_{2k-1} v_{2k}^{-1} \right) \left( \prod_{k=\ell+1}^{n} v_{2k}^{-1} v_{2k+1} \right) v_r & j=n+1, \enspace \text{and}  \end{cases}$$
$$\eta_j=\begin{cases} \left( \prod_{k=1}^{j-1} v_{2k-1} v_{2k}^{-1} \right) v_{2j} & j \leq \ell\\
\left( \prod_{k=1}^{\ell} v_{2k-1} v_{2k}^{-1} \right) \left( \prod_{k=\ell+1}^{j-1} v_{2k} v_{2k+1}^{-1} \right) v_{2 j+1} & \ell+1 \leq j \leq n \\
\left( \prod_{k=1}^{\ell} v_{2k-1} v_{2k}^{-1} \right) \left( \prod_{k=\ell+1}^{n} v_{2k}^{-1} v_{2k+1} \right) w & j=n+1.  \end{cases}$$
Clearly $F \langle v_1,\dots,v_{2n+1},w\rangle=F \langle \mu_1,\eta_1,\dots,\mu_{n+1},\eta_{n+1} \rangle$. Note that for each $j \in \{1,\dots,n+1\}$, $\eta_j$ and $\mu_j$ commute with all the other elements in $\left\{\mu_1,\eta_1,\dots,\mu_{n+1},\eta_{n+1}\right\}$ except each other. Therefore $F \langle \mu_1,\eta_1,\dots,\mu_{n+1},\eta_{n+1}\rangle$ decomposes as
$$F \langle \mu_1,\eta_1 \rangle \otimes \dots \otimes F \langle \mu_{n+1},\eta_{n+1}\rangle.$$
The first $n$ algebras in this decomposition are cyclic of degree $4$ and the last algebra is a quaternion algebra over $F$. 
\end{proof}

\begin{thm}\label{main}
The dimension of a monomial Kummer space in a division tensor product $A$ of $n$ cyclic algebras of degree 4 is at most $4 n+1$.
\end{thm}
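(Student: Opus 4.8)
The plan is to bound the number of vertices in the graph $(B, E_B)$, since this number equals the dimension of the monomial Kummer space. The accumulated structure theory tells us a great deal about how this graph can look: by \Lref{anti-commute} dashed (anti-commuting) edges form a partial matching, by \Pref{noOtherCycle} every directed cycle has length exactly four, and by \Cref{dircy} each such four-cycle carries a pair of crossing dashed edges. The two ``block'' lemmas, \Lref{CycleAllArrows} and \Lref{blockII}, say that once a four-element configuration of the prescribed type appears, every other vertex $w$ either dominates all four of its vertices or is dominated by all four. So the idea is to peel off such four-cycles and collapse each into a single vertex, reducing to a purely directed (tournament-like) situation where \Lref{path} applies.

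First I would consider the subgraph of solid (directed) edges together with the matching of dashed edges. \Cref{dircy} shows that each directed four-cycle contains two dashed edges; conversely I would argue that the dashed matching partitions into pairs, each pair sitting inside one four-cycle ``block'' of four vertices. Using \Lref{CycleAllArrows} and \Lref{blockII}, each such block behaves like a single super-vertex with respect to all remaining vertices: every outside $w$ uniformly points into or out of the whole block. Contracting all blocks and recalling that the remaining solid edges contain no directed cycle (again by \Pref{noOtherCycle}), \Lref{path} lets me linearly order the contracted graph as a transitive tournament $v_1 \to v_2 \to \dots$, where each term is either an original vertex or a contracted four-block.

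The counting then goes as follows. If $B$ contains no dashed edge at all, then all of $B$ forms a single directed chain by \Lref{path}; I claim such a chain has length at most $2n+1$, because a strictly increasing chain of anticommuting-free Kummer monomials spans (after the change of variables in the proof of \Lref{quaternion}) a product of $n$ cyclic algebras of degree $4$, whose Kummer monomials in ``increasing'' position number at most $2n+1$. The presence of a block of size four adds two extra dashed-paired vertices beyond what a length-two chain would contribute; I would charge these carefully so that the total vertex count still does not exceed $4n+1$. The key leverage is \Lref{quaternion}: whenever a long increasing chain of length $2n+1$ is augmented by a vertex $w$ attached as in its hypotheses, $F\langle B\rangle$ would contain a tensor product of $n$ degree-$4$ cyclic algebras and a quaternion algebra, forcing the total degree of $A$ to exceed $4^n$, which contradicts $A$ being a division algebra of degree exactly $4^n$. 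This dimension contradiction is what caps the chain, and hence the graph, at $4n+1$ vertices.

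The main obstacle I anticipate is the bookkeeping that ties the combinatorial block-decomposition to the algebraic degree bound. It is straightforward to order the contracted graph linearly, but translating ``the ordered graph has $> 4n+1$ vertices'' into an explicit chain or chain-plus-block configuration that satisfies the precise hypotheses of \Lref{quaternion} (an \emph{odd} index $r$, arrows in before $r$, a dashed edge at $r$, arrows out after $r$) requires care. I expect the heart of the argument to be showing that exceeding $4n+1$ always produces such a configuration, so that the quaternion-algebra embedding of \Lref{quaternion} collides with the division-algebra hypothesis on $A$; the dashed-edge matching constraint from \Lref{anti-commute} is exactly what prevents escaping this conclusion by adding extra anticommuting vertices.
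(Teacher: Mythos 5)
Your first half reproduces the paper's opening moves correctly: by \Lref{anti-commute} the dashed edges form a partial matching, deleting one vertex from each dashed pair leaves a set on which \Lref{path} applies, and a directed chain $v_1 \to \cdots \to v_{2n+2}$ would generate inside $A$ a tensor product of $n+1$ cyclic algebras of degree $4$ (the change-of-variables you cite from \Lref{quaternion}), contradicting $\deg A = 4^n$. So the chain has at most $2n+1$ vertices, and if $|B| = 4n+2$ the extremal configuration is forced: a chain $v_1 \to \cdots \to v_{2n+1}$ in which \emph{every} $v_k$ has an anti-commuting partner $v_k'$. This configuration has exactly $4n+2$ vertices, so no amount of ``charging'' vertex counts can rule it out --- it must be shown to be algebraically impossible, and that is the bulk of the paper's proof, which your proposal does not supply.

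Your substitute for this step --- contracting directed four-cycles into super-vertices and bookkeeping the dashed pairs --- fails on its premises. First, \Cref{dircy} says every directed four-cycle carries two dashed diagonals, but the converse you assert (``each dashed pair sitting inside one four-cycle block'') is false: in the extremal configuration the four-vertex blocks $\{v_k, v_k', v_{k+1}, v_{k+1}'\}$ may carry their two dashed edges with no directed cycle at all (the paper's type I blocks, which in fact occur in every other position of the forced pattern). Second, consecutive blocks share the vertices $v_{k+1}, v_{k+1}'$, so they are not disjoint contractible units. What the paper actually does at this point is classify each block into four types using \Pref{fourelements}, establish transition rules between consecutive blocks (via \Lref{CycleAllArrows}, \Lref{blockII}, and \Lref{quaternion} --- whose odd-$r$ hypothesis is supplied exactly by the forced alternation of types, resolving the parity issue you flag but do not settle), and then derive a parity contradiction: the types must alternate II/IV, I, II/IV, I, $\ldots$, so block $B_{2n}$ is type I, while a separate application of \Lref{quaternion} with $r = 2n+1$ forces $B_{2n}$ to be type III or IV. You correctly identify this as ``the heart of the argument'' you anticipate needing, but the proposal stops at anticipating it; as written there is a genuine gap precisely at the extremal case $r = 2n+1$ with a full matching.
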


\begin{proof}
Suppose there is a monomial Kummer space with basis $B$ of dimension $4n+2$. By Lemma \ref{anti-commute}, every element in $B$ can anti-commute with at most one other element. Note that the maximal number of such pairs in $B$ is $\left\lfloor\frac{ \left| B \right|}2\right\rfloor = 2n+1$. Let $B'$ be a subset obtained by taking off arbitrarily one element from each pair of anti-commuting elements in $B$. For every two elements $v$ and $y$ in $B'$ we have either $\xymatrix{v \ar[r]& y} $ or $\xymatrix{ v & y \ar[l]}$. Therefore, by Lemma \ref{path}, $B=\left\{v_1,v_2,\dots,v_{r}\right\}$ where $\xymatrix{v_k \ar[r]& v_j}$ for every $1 \leq k < j \leq r$ such that $r \geq 2 n+1$. \\
Suppose that $r > 2n+1$. Then $F\langle B\rangle$ contains the subalgebra $F \langle v_1,\dots,v_{2n+2} \rangle$ which decomposes as a tensor product of $n+1$ cyclic algebras of degree $4$
$$F\left<v_1, v_2\right> \otimes F \left<v_1 v_2^{-1} v_3 , v_1 v_2^{-1} v_4\right> \otimes \cdots \otimes F\left<\left( \prod_{k=1}^n v_{2k-1} v_{2k}^{-1}\right) v_{2n+1} , \left( \prod_{k=1}^n v_{2k-1}v_{2k}^{-1} \right) v_{2n+2} \right>,$$ contradictory to the degree of $A$. The decomposition can be explained in a similar way to Lemma \ref{quaternion}. \\

Therefore $r=2n+1$. We want to write down the graph of $B$, so we start with the path containing the elements of $B'$, and each $v_k$ has an anti-commuting partner $v_k'$:
$$\xymatrix{v_1 \ar[d] \ar@{--}[r] & v_1' \\ v_2 \ar[d] \ar@{--}[r]&v_2' \\ \vdots \ar[d] \\ v_{2n+1} \ar@{--}[r]& v_{2n+1}'}$$
We want to determine the remaining edges in this graph. 
Recall that every element can only anti-commute with at most one other element. By Proposition \ref{fourelements}, there is no subgraph of the form
$$\xymatrix{ v\ar[r] \ar[d] \ar@{--}[rd] & w  \ar@{--}[ld]   \\   s\ar[r] &t.  \ar[u] } $$

Let $B_k$ be the subgraph (``block") containing the vertices $v_k,v_k',v_{k+1},v_{k+1}'$.
A priori there are 8 options for each block $B_k$.
Four out of these options are of the forbidden type mentioned above.
Therefore each block $B_k$ can only be one of the following types:\\
\begin{longtabu}{X[m]X[3,l,m]X[m]X[3,l,m]}
Type I: &$\xymatrix{v_k\ar[d]\ar@{--}[r]\ar[rd] & v_k'\ar[ld]\ar[d]\\v_{k+1}\ar@{--}[r] & v_{k+1}'}$ &Type II: &$\xymatrix{v_k\ar[d]\ar@{--}[r]\ar[rd] &v_k'\ar@{<-}[ld]\\v_{k+1}\ar@{--}[r] &v_{k+1}'\ar[u]}$\\
Type III: &$\xymatrix{v_k\ar[d]\ar@{--}[r]\ar@{<-}[rd] &v_k'\ar[ld]\\v_{k+1}\ar@{--}[r] &v_{k+1}'\ar[u]}$ &
Type IV: &$\xymatrix{v_k\ar[d]\ar@{--}[r]\ar@{<-}[rd] &v_k'\ar@{<-}[ld]\ar[d]\\v_{k+1}\ar@{--}[r] &v_{k+1}'}$
\end{longtabu}

Now we show that every possible combination leads to a contradiction.\\

\noindent\textbf{Step 1:} \textit{$B_1$ is of type IV or II:}\\
\noindent In the remaining cases, we have $\xymatrix{ v_1' \ar[r]& v_2}$ and hence $\xymatrix{v_1' \ar[r]& v_k}$ for all $k>1$. So by Lemma \ref{quaternion} the set $\left\{v_1',v_1, v_2, \ldots, v_{2n+1} \right\}$ generates over $F$ a tensor product of $n$ cyclic algebras of degree 4 and one quaternion algebra, contradiction.\\

\noindent\textbf{Step 2:} \textit{If $B_k$ is of type II or IV then $B_{k+1}$ is of type I:}\\
\noindent Assume $B_k$ is of type II.
If $B_{k+1}$ is of type II or IV, the element $v_{k+2}'$ contradicts Lemma \ref{blockII}, since either $\xymatrix{v_{k+1} \ar[r]& v_{k+2}'}$ and $\xymatrix{v_{k+2}' \ar[r]& v_{k+1}'}$ or vice versa.\\
If $B_{k+1}$ is of type III then we have
$$\xymatrix{v_k  \ar[r] & v_{k+1} \ar[d] \\ v_{k+2}' \ar[r] \ar[ru] & v_k' \ar@{--}[lu] }$$
and thus by Proposition \ref{fourelements} $\xymatrix{ v_{k+2'} \ar[r]& v_k.}$ This is a contradiction since by Proposition \ref{fourelements} the following is impossible
$$\xymatrix{v_k \ar[r] \ar[rd] & v_{k+2} \ar@{--}[ld] \\ v_{k+2}' \ar[u] \ar[r]& v_{k+1}'. \ar[u]}$$

Assume $B_k$ is of type IV.
If $B_{k+1}$ is of type IV, there is a second cycle. This is impossible since by Lemma \ref{anti-commute} all cycles are vertex-disjoint. \\
If $B_{k+1}$ is of type II then $v_k,v_{k+1},v_{k+2}',v_{k+1}'$ is a cycle which intersects the cycle $B_k$, contradiction.
Suppose $B_{k+1}$ is of type III. Then we have $$\xymatrix{v_k \ar[d] \ar@{--}[r]& v_k' \ar[ld] \\ v_{k+2} \ar@{--}[r] & v_{k+2}'. \ar[lu] \ar[u]}$$
This contradicts Lemma \ref{blockII} since $\xymatrix{v_{k+1} \ar[r]& v_{k+2}}$ and $\xymatrix{v_{k+2}' \ar[r]& v_{k+1}}$.\\

\noindent\textbf{Step 3: } \textit{If $B_k$ is of type I and $k$ is even, then $B_{k+1}$ is of type II or IV: }\\
Similar to Step 1 above, we get that if $B_{k+1}$ is of type I or III, we have a graph
$$\xymatrix{
               &                    &                 & v_{k+1}' \ar@{--}[d] \ar[rd]\ar[rrd]\ar[rrrd]\\
v_1 \ar[rrru]\ar[r]& \cdots \ar[rru]\ar[r]& \ar[ru]v_k \ar[r]& v_{k+1} \ar[r]& v_{k+2} \ar[r] & \cdots \ar[r] & v_{2n+1} }
$$
and so the condition of Lemma \ref{quaternion} is satisfied,  contradiction.\\

\textbf{Step 4:} \textit{Finally, $B_{2n}$ has to be of type III or IV:}\\
If the last block is of type I or II, then the set $\left\{v_1, \ldots, v_{2n+1}, v_{2n+1}'\right\}$ satisfies the condition of Lemma \ref{quaternion} with $r=2n+1$, contradiction.\\

By Step 1 $B_1$ is of type II or IV. But then by Step 2 $B_2$ is of type I. By Step 3 $B_3$ is of type II or IV. This pattern continues for the following blocks. Now the number of blocks is even and so the last block, $B_{2n}$ is of type I, contradictory to Step 5.
\end{proof}

\begin{thm}
Let $F=K(\alpha_k,\beta_k : 1 \leq k \leq n)$ be the function field in $2 n$ variables over a field $K$ of characteristic 0 containing primitive 4th root of unity $i$, and let $A=\otimes_{k=1}^n \left(\alpha_k,\beta_k\right)_{4,k}$  be the generic tensor product of $n$ cyclic algebras of degree 4. The maximal dimension of a Kummer space in $A$ is $4 n+1$.
\end{thm}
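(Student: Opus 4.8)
The plan is to obtain this statement as a short corollary of the results already established, since all the genuinely difficult work has been carried out in \Tref{main} and \Rref{mainremark}. The argument splits cleanly into a lower bound and an upper bound, and I would present it as a combination of the explicit construction, the monomialization, and the combinatorial bound.

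For the lower bound I would simply point to the recursive construction in \Sref{Prem}. Specializing to $d=4$, the spaces given by $V_0=F$ and $V_k=F[x_k]y_k+V_{k-1}x_k$ yield a monomial Kummer space $V_n$ of dimension $4n+1$. As every monomial Kummer space is in particular a Kummer space, this already shows that the maximal dimension of a Kummer space in $A$ is at least $4n+1$.

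For the upper bound, the first point to record is that the generic algebra $A$ is a division algebra, so that \Tref{main} applies. This follows from the valuation-theoretic setup of \Rref{mainremark}: extending scalars to the iterated Laurent series field $L=K((\alpha_1))((\beta_1))\cdots((\alpha_n))((\beta_n))$, the algebra $A\otimes_F L$ is precisely the totally ramified twisted Laurent series algebra $D$, which is a division algebra. Since a scalar extension of a central simple algebra that is not a division algebra is again not a division algebra, the fact that $A\otimes_F L\cong D$ is division forces $A$ itself to be a division algebra. Now let $V\subseteq A$ be an arbitrary Kummer space. By \Rref{mainremark}, $V\otimes_F L$ is a Kummer space in $D$ with $[V\otimes_F L:L]=[V:F]$, its leading-term space $\widetilde{V\otimes_F L}$ is a monomial Kummer space in $D$ of that same dimension, and $\widetilde{V\otimes_F L}\cap A$ is a monomial Kummer space in $A$ with $[\widetilde{V\otimes_F L}\cap A:F]=[V:F]$. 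Since $A$ is a division tensor product of $n$ cyclic algebras of degree $4$, \Tref{main} bounds the dimension of this monomial Kummer space by $4n+1$, whence $\dim_F V\leq 4n+1$. Combined with the lower bound, the maximal dimension is exactly $4n+1$.

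I would expect the only point requiring genuine care to be the reduction from an arbitrary Kummer space to a monomial one, namely the verification that passing to leading terms and intersecting back with $A$ preserves both the Kummer property and the dimension. That is exactly the content of \Rref{mainremark} (itself a restatement of \cite[Theorem 4.1]{KummerTensorChapman} in valuation-theoretic language), so once it is available the present theorem is immediate; everything else is bookkeeping.
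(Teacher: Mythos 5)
Your proposal is correct and follows essentially the same route as the paper's own (very terse) proof: reduce an arbitrary Kummer space to a monomial one via \Rref{mainremark} (i.e.\ \cite[Theorem 4.1]{KummerTensorChapman}), apply \Tref{main} for the upper bound $4n+1$, and use the recursive construction $V_k=F[x_k]y_k+V_{k-1}x_k$ from \Sref{Prem} for the lower bound. If anything, you are more careful than the paper, which leaves implicit both the lower-bound construction and the verification that the generic $A$ is a division algebra (your observation that $A\otimes_F L\isom D$ is division, hence $A$ is division, is exactly the missing justification needed for \Tref{main} to apply).
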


\begin{proof}
By Remark \ref{mainremark} the dimension of a monomial Kummer space in $A$ is bounded by $4 n+1$. By \cite[Theorem 4.1]{KummerTensorChapman} it holds for any Kummer space in $A$.
\end{proof}

\section*{Acknowledgements}
\noindent The authors thank Adrian Wadsworth for his comments on the manuscript and for pointing out the connection to valuation theory. The authors also thank Rajesh Kulkarni and Jean-Pierre Tignol for commenting on the manuscript.

\section*{Bibliography}
\bibliographystyle{amsalpha}
\bibliography{bibfile}

\newcommand{\etalchar}[1]{$^{#1}$}
\def\cprime{$'$}
\providecommand{\bysame}{\leavevmode\hbox to3em{\hrulefill}\thinspace}
\providecommand{\MR}{\relax\ifhmode\unskip\space\fi MR }
\providecommand{\MRhref}[2]{%
  \href{http://www.ams.org/mathscinet-getitem?mr=#1}{#2}
}
\providecommand{\href}[2]{#2}
\begin{thebibliography}{CGM{\etalchar{+}}}

\bibitem[CGM{\etalchar{+}}]{CGMRV}
A.~Chapman, D.~J. Grynkiewicz, E.~Matzri, L.~H. Rowen, and U.~Vishne,
  \emph{Monomial kummer subspaces in central simple algebras}, arXiv:1410.6136.

\bibitem[Cha]{KummerTensorChapman}
Adam Chapman, \emph{Kummer subspaces of tensor products of cyclic algebras},
  arXiv:1405.0188.

\bibitem[CV12]{ChapVish1}
Adam Chapman and Uzi Vishne, \emph{Clifford algebras of binary homogeneous
  forms}, J. Algebra \textbf{366} (2012), 94--111. \MR{2942645}

\bibitem[Mat16]{Matzri}
Eliyahu Matzri, \emph{Symbol length in the {B}rauer group of a field}, Trans.
  Amer. Math. Soc. \textbf{368} (2016), no.~1, 413--427. \MR{3413868}

\bibitem[MS82]{MS}
A.~S. Merkur{\cprime}ev and A.~A. Suslin, \emph{{$K$}-cohomology of
  {S}everi-{B}rauer varieties and the norm residue homomorphism}, Izv. Akad.
  Nauk SSSR Ser. Mat. \textbf{46} (1982), no.~5, 1011--1046, 1135--1136.
  \MR{675529 (84i:12007)}

\bibitem[MV12]{MV1}
Eliyahu Matzri and Uzi Vishne, \emph{Isotropic subspaces in symmetric
  composition algebras and {K}ummer subspaces in central simple algebras of
  degree 3}, Manuscripta Math. \textbf{137} (2012), no.~3-4, 497--523.
  \MR{2875290}

\bibitem[Rev77]{Revoy}
Ph. Revoy, \emph{Alg\`ebres de {C}lifford et alg\`ebres ext\'erieures}, J.
  Algebra \textbf{46} (1977), no.~1, 268--277. \MR{0472881 (57 \#12568)}

\bibitem[TW15]{TignolWadsworth:2015}
J.-P. Tignol and A.~R. Wadsworth, \emph{Value functions on simple algebras, and
  associated graded rings}, Springer Monographs in Mathematics, Springer, 2015.

\end{thebibliography}
\end{document}